\newif\ifdviwin
\newif\ifdviwin
\def\m2r{\mathbb{M}^2\times\mathbb{R}}
\def\h2r{\mathbb{H}^2\times\mathbb{R}}
\def\Z{\mathbb{Z}}
\def\R{\mathbb{R}}
\def\rn{\mathbb{R}^n}
\def\rnn{\mathbb{R}^{n+1}}
\def\h{\mathfrak{h}}
\def\M{\mathcal{M}}
\def\H{\mathcal{H}}
\def\S{\mathbb{S}}
\def\s2{\mathbb{S}^2}
\def\sig{\Sigma}
\def\n1{_{n+1}}
\def\nm1{_{n-1}}
\def\t1{\Theta_1}
\def\tm1{\Theta_{-1}}
\def\r3{\R^3}
\def\hi{\H\in C^1(\S^2)}
 \newtheorem{definition}{Definition}
 \newtheorem{theorem}[definition]{Theorem}
  \newtheorem{remark}[definition]{Remark}
 \newtheorem{proposition}[definition]{Proposition}
 \newenvironment{proof}{\rm \trivlist \item[\hskip \labelsep{\it
      Proof}:]}{\par\nopagebreak \hfill $\Box$ \endtrivlist}
\numberwithin{equation}{section}
\begin{document}

\mbox{}\vspace{0.4cm}

\begin{center}
\rule{15cm}{1.5pt}\vspace{0.5cm}

\renewcommand{\thefootnote}{\,}
{\Large \bf The Björling problem for prescribed mean curvature surfaces in $\r3$  \footnote{\hspace{-.75cm} Mathematics Subject
Classification: 53A10, 53C42.}}\\ \vspace{0.5cm} {\large Antonio Bueno}\\ \vspace{0.3cm} \rule{15cm}{1.5pt}
\end{center}
  \vspace{1cm}
Departamento de Geometría y Topología, Universidad de Granada,
E-18071 Granada, Spain. \\ e-mail: jabueno@ugr.es \vspace{0.3cm}

 \begin{abstract}
In this paper we prove existence and uniqueness of the Björling problem for the class of immersed surfaces in $\r3$ whose mean curvature is given as an analytic function depending on its Gauss map. As an application, we prove the existence of surfaces with the topology of a Möbius strip for an arbitrary large class of prescribed functions. In particular, we use the Björling problem to construct the first known examples of self-translating solitons of the mean curvature flow with the topology of a Möbius strip in $\mathbb{R}^3$.
 \end{abstract}

\section{Introduction}

A classical problem in minimal surface theory in $\r3$ is the \emph{Björling problem} \cite{Bjo}. This problem was posed in 1844 by Björling and asks the following:
\begin{quote}
\emph{Given a regular analytic curve $\beta(s)$ in $\r3$ and an analytic distribution of oriented planes $\Pi(s)$ along $\beta(s)$ such that $\beta'(s)\in\Pi(s)$, find \emph{all} minimal surfaces in $\r3$ containing $\beta(s)$ and such that the tangent plane distribution along $\beta(s)$ is given by $\Pi(s)$.}
\end{quote}

In 1890 Schwarz \cite{Sch} solved this problem via an integral representation formula, using holomorphic data. This problem can be applied in other generic situations: for instance, to study surfaces with certain symmetries \cite{DHKW}, and to solve global problems in the minimal surface theory \cite{AlMi,GaMi1}; see also \cite{ACG, GaMi2,GaMi3} and references therein for an outline on the development of the \emph{geometric Cauchy problem}. The Björling problem has been also studied when the mean curvature is a non-vanishing constant, see \cite{BrDo}.

Our objective in this paper is to prove existence and uniqueness of the Björling problem for a certain class of \emph{prescribed mean curvature surfaces} immersed in $\R^3$. Specifically, let be $\sig$ an oriented, immersed surface in $\r3$ and $\hi$. We say that $\sig$ has \emph{prescribed mean curvature} $\H$ (in short, $\sig$ is an $\H$-surface) if the mean curvature $H_\sig$ of $\sig$ satisfies
\begin{equation}\label{defHsup}
H_\sig(p)=\H(\eta_p),\hspace{.5cm} \forall p\in\sig,
\end{equation}
where $\eta:\sig\rightarrow\s2$ is the \emph{Gauss map} of $\sig$. 

In general, the study of hypersurfaces in $\rnn$ defined by a prescribed curvature function in terms of the Gauss map goes back, at least, to the famous Christoffel and Minkowski problems for ovaloids, see e.g. \cite{Chr}. In particular, the existence and uniqueness of ovaloids with prescribed mean curvature \eqref{defHsup} was studied among others by Alexandrov and Pogorelov \cite{Ale,Pog} but the global geometry of these surfaces remained largely unexplored. In \cite{GaMi4} the authors studied uniqueness of immersed $\H$-spheres obtaining a Hopf-type theorem for this class of immersed surfaces, and in the making they proved a standing conjecture by Alexandrov. The global properties of $\H$-hypersurfaces immersed in $\rnn$ have been recently developed in \cite{BGM}, where the authors studied several topics such as classification of rotational $\H$-hypersurfaces, a priori height estimates and a structure theorem for properly embedded $\H$-surfaces in $\r3$, and curvature estimates for stable $\H$-surfaces immersed in $\R^3$.

This paper is organized as follows: in \textbf{Section \ref{sec:resolutionbjorling}} we prove existence and uniqueness of the Björling problem for the class of analytic functions $\H\in C^\omega(\s2)$ by applying Cauchy-Kovalevskaya theorem. This theorem has been used in other works to prove existence and uniqueness of the Björling problem for minimal surfaces in three-dimensional Riemannian and Lorentzian Lie groups via a Weierstrass-type representation formula, see e.g. \cite{CMO,MMP, MeOn}.

In \textbf{Section \ref{sec:hmobius}} we restrict ourselves to the class of analytic functions $\H\in C^\omega(\s2)$ satisfying the symmetry condition $\H(-x)=-\H(x),\ \forall x\in\s2$. This condition on $\H$ ensures us that Equation \eqref{defHsup} is independent of the orientation chosen on the $\H$-surface. Bearing this in mind, we use the existence and uniqueness of the Björling problem for adequate Björling data to construct non-orientable $\H$-surfaces with the topology of a Möbius strip. A particular analytic function with this symmetry condition is the one given by $\H(x)=\langle x,e_3\rangle,\ \forall x\in\s2$. The $\H$-surfaces arising for this prescribed function are the \emph{self-translating solitons of the mean curvature flow} in $\r3$, a well studied class of surfaces in the past decades. See e.g. \cite{CSS,Hui,MSHS} and references therein for relevant works regarding this topic. As an application, we construct self-translating solitons of the mean curvature flow in $\r3$ with the topology of a \emph{Möbius strip}. After a detailed search in the literature, we can assert that these \emph{translating Möbius strips} are the first known examples of self-translating solitons with non-orientable topology.

Finally, in \textbf{Section \ref{sec:masejemplos}} we use the solution of the Björling problem to construct further examples of $\H$-surfaces for analytic prescribed functions. In particular, we construct \emph{helicoidal} $\H$-surfaces, and $\H$-surfaces similar to Enneper's classical minimal surface.
\vspace{.5cm}

\textbf{Acknowledgment.} The author wants to express his gratitude to his Ph.D. advisor Pablo Mira, for fruitful conversations about this topic.

\section{The Björling problem for $\H$-surfaces}\label{sec:resolutionbjorling}
In this paper we will denote by $C^\omega(\s2)$ to the class of \emph{analytic functions} defined on the sphere $\s2$ in $\r3$.

\begin{definition}
Let be $I\subset\R$ an open interval. A pair of \emph{Björling data} for $\H$-surfaces in $\r3$ is a regular analytic curve $\beta:I\rightarrow\r3$ and an analytic vector field $B:I\rightarrow\r3$ along $\beta(s)$ such that $|\beta'(s)|-|B(s)|=\langle\beta'(s),B(s)\rangle=0,\ \forall s\in I$.
\end{definition}

From this definition, we get two obvious consequences: First, given $\H\in C^\omega(\s2)$, a regular analytic curve $\beta(s)$ and an oriented distribution of planes $\Pi(s)$ along $\beta(s)$, we get a pair of Björling data by just defining $B(s)=J\beta'(s)$, where $J$ denotes the $\pi/2$-rotation in the tangent plane $\Pi(s)$. And second, let us denote by $\nabla$ to the Riemannian connection of $\sig$ and suppose that $\beta(s)$ is parametrized by arc-length. Moreover, suppose that $\beta(s)$ is not a straight line. Then there exists $s_0\in\R$ such that $|\beta''(s_0)|\neq 0$. Let $I_0$ be the largest subinterval of $I$ containing $s_0$ such that $|\beta''(s)|\neq 0$ for all $s\in I_0$. If we define $V(s):=\nabla_{\beta'(s)}\beta'(s)$ then $B(s)=V(s)/|V(s)|$ is an analytic unit vector field along $\beta(s)$ such that $\langle\beta'(s),B(s)\rangle=0$. Thus, $B(s)$ determines an oriented distribution of planes $\Pi(s)$ along $\beta(s)$ by defining $\Pi(s)=\beta(s)+B(s)^\bot$. In particular, the Björling problem generalizes the problem of finding a surface which contains a given curve as a geodesic.

Although we do not have a Weierstrass representation for $\H$-surfaces, and thus we cannot solve the Björling problem with an \emph{explicit} integral representation formula just as in the case $\H=0$ (see e.g. \cite{Mir}), we can prove existence and uniqueness of this problem by applying different methods, as other have done in similar situations; see e.g. \cite{CMO, MeOn}.

Let $\sig$ be an orientable Riemannian surface and let $\psi:\sig\rightarrow\r3$ be an isometric immersion of $\sig$ in $\r3$. Then, it is well known that the coordinates of $\psi$ satisfy the elliptic PDE
\begin{equation}\label{laplainmersion}
\Delta_\sig\psi=2H_\sig\eta,
\end{equation}
where $\Delta_\sig$ stands for the Laplace-Beltrami operator on $\sig$, $\eta$ denotes the Gauss map of $\sig$ and $H_\sig$ is the mean curvature of $\sig$ computed with respect to $\eta$. 

Recall that $\sig$ inherits a Riemann surface structure induced by its first fundamental form, and thus we can consider a conformal coordinate $z=s+it$ defined in a simply connected domain $\Omega\subset\mathbb{C}$, and we define the usual \emph{Wirtinger operators} $\partial_z=1/2(\partial_s-i\partial_t)$, $\partial_{\overline{z}}=1/2(\partial_s+i\partial_t)$. Then, the induced metric on $\sig$ is expressed as $\langle\cdot,\cdot\rangle=\lambda^2|dz|^2$, where $|dz|^2$ is the flat metric on $\Omega$ and $\lambda^2=\langle\partial_s,\partial_s\rangle=\langle\partial_t,\partial_t\rangle>0$ is the \emph{conformal factor}. For such a conformal coordinate, the operator $\Delta_\sig$ writes as
\begin{equation}\label{laplainmersion2}
\Delta_\sig=\frac{1}{\lambda^2}\Delta_0=\frac{1}{\lambda^2}(\partial_{ss}+\partial_{tt})=\frac{4}{\lambda^2}\partial_{z\overline{z}},
\end{equation} 
where $\Delta_0$ denotes the usual flat Laplacian, and we used the relation of the Laplace-Beltrami operator between two conformal metrics. On the other hand, the Gauss map $\eta$ of $\sig$ has the following expression with respect to $z$:
\begin{equation}\label{normalconforme}
\eta=\frac{2}{i}\frac{\psi_z\wedge\psi_{\overline{z}}}{\sqrt{\langle\psi_z\wedge\psi_{\overline{z}},\psi_z\wedge\psi_{\overline{z}}\rangle}}=\frac{2}{i\lambda^2}\psi_z\wedge\psi_{\overline{z}}.
\end{equation}

Suppose now that the immersion $\psi:\sig\rightarrow\r3$ defines an $\H$-surface for some $\H\in C^\omega(\s2)$. By Equations \eqref{laplainmersion2} and \eqref{normalconforme}, Equation \eqref{laplainmersion} writes as
\begin{equation}\label{pdeinmersion}
\psi_{z\overline{z}}=-i\H(\eta)\psi_z\wedge\psi_{\overline{z}}.
\end{equation}
Viewing the immersion in coordinates $\psi=(\psi_1,\psi_2,\psi_3)$, then Equation \eqref{pdeinmersion} can be seen as a system of partial differential equations. In this setting, we can prove existence and uniqueness of the Björling problem for the class of $\H$-surfaces, as stated next:

\begin{theorem}[\textbf{Björling problem for $\H$-surfaces}]\label{thbjorling}
Let be $\H\in C^\omega(\s2)$ and $\beta(s),B(s)$ a pair of Björling data defined on a real interval $I$. Then, there exists an open domain $\Omega\subset\mathbb{C}$ containing $I\times\{0\}$ and a conformal immersion $\psi:\Omega\rightarrow\r3$ that solves the following system
\begin{equation}\label{ecubjorling}
\left\lbrace\begin{array}{l}
\psi_{z\overline{z}}=-i\H(\eta)\psi_z\wedge\psi_{\overline{z}},\\
\psi(s,0)=\beta(s),\\
\psi_t(s,0)=B(s).
\end{array}\right.
\end{equation}

As a consequence, $\psi$ defines an $\H$-surface $\sig$ that contains the curve $\beta(s)$, and the tangent plane distribution $T_{\beta(s)}\sig$ at each point $\beta(s)\in\sig$ is spanned by the vectors $\beta'(s)$ and $B(s)$.
\end{theorem}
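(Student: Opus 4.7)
The plan is to recast \eqref{ecubjorling} in Kovalevskaya normal form, apply the Cauchy-Kovalevskaya theorem to obtain an analytic solution, and then extract conformality as a conserved quantity propagating from the initial curve.

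First, I would rewrite the PDE in the real coordinates $z=s+it$. The identities $\psi_{z\overline{z}} = \tfrac{1}{4}(\psi_{ss}+\psi_{tt})$ and $\psi_z \wedge \psi_{\overline{z}} = \tfrac{i}{2}\,\psi_s \wedge \psi_t$ transform the equation in \eqref{ecubjorling} into
\begin{equation*}
\psi_{tt} = -\psi_{ss} + 2\H\!\left(\frac{\psi_s \wedge \psi_t}{|\psi_s \wedge \psi_t|}\right)\psi_s \wedge \psi_t.
\end{equation*}
This is in Kovalevskaya normal form with the $t$-axis non-characteristic, and its right-hand side is real-analytic in $(\psi,\psi_s,\psi_t,\psi_{ss})$ on the open set where $\psi_s\wedge\psi_t\neq 0$, since $\H\in C^\omega(\s2)$. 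The Björling conditions $|\beta'(s)|=|B(s)|>0$ and $\langle\beta'(s),B(s)\rangle=0$ guarantee $\beta'(s)\wedge B(s) \neq 0$, so the analytic Cauchy data $\psi(s,0)=\beta(s)$, $\psi_t(s,0)=B(s)$ stay in the analyticity region of the right-hand side. The Cauchy-Kovalevskaya theorem then yields a unique analytic solution $\psi:\Omega\to\r3$ on an open neighborhood $\Omega$ of $I\times\{0\}$ satisfying the prescribed initial conditions.

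Next, I would upgrade $\psi$ to a conformal immersion. Setting $\phi(z):=\langle\psi_z,\psi_z\rangle$ and differentiating while invoking the PDE,
\begin{equation*}
\partial_{\overline{z}}\phi = 2\langle\psi_{z\overline{z}},\psi_z\rangle = -2i\H(\eta)\,\langle\psi_z\wedge\psi_{\overline{z}},\psi_z\rangle = 0,
\end{equation*}
so $\phi$ is holomorphic on $\Omega$. Evaluating at $t=0$ with $\psi_z(s,0)=\tfrac{1}{2}(\beta'(s)-iB(s))$, the Björling conditions yield $\phi(s,0) = \tfrac{1}{4}\bigl(|\beta'(s)|^2 - |B(s)|^2 - 2i\langle\beta'(s),B(s)\rangle\bigr) = 0$, and the identity principle forces $\phi\equiv 0$ on $\Omega$. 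This vanishing is exactly the conformality relation $|\psi_s|=|\psi_t|$, $\langle\psi_s,\psi_t\rangle=0$. Since $\psi_s\wedge\psi_t\neq 0$ at $t=0$, by continuity the same holds on a possibly smaller neighborhood; after shrinking $\Omega$, $\psi$ is a conformal immersion there.

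Finally, once $\psi$ is a conformal immersion satisfying \eqref{pdeinmersion}, formulas \eqref{laplainmersion2} and \eqref{normalconforme} substituted into \eqref{laplainmersion} identify $H_\sig = \H\circ\eta$, i.e., $\sig:=\psi(\Omega)$ is an $\H$-surface. By construction $\beta(s)=\psi(s,0)\in\sig$, and $T_{\beta(s)}\sig$ is spanned by $\psi_s(s,0)=\beta'(s)$ and $\psi_t(s,0)=B(s)$. The main delicate point is the conformality step: Cauchy-Kovalevskaya only delivers a solution of the raw PDE, and conformality must be recovered as a consequence of the Björling conditions along $I\times\{0\}$. The holomorphicity of $\phi$ is precisely what makes this propagation work, and it relies on the fact that the right-hand side of the PDE is a multiple of $\psi_z\wedge\psi_{\overline{z}}$, which is automatically orthogonal to $\psi_z$.
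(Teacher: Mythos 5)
Your proposal is correct and follows essentially the same route as the paper: Cauchy--Kovalevskaya applied to the system, holomorphicity of $\langle\psi_z,\psi_z\rangle$ plus the identity principle to propagate conformality from the Björling conditions, and nondegeneracy of the initial data to get an immersion. You are in fact slightly more explicit than the paper in writing the equation in Kovalevskaya normal form and in noting that the right-hand side is analytic only where $\psi_s\wedge\psi_t\neq 0$, which is a welcome clarification rather than a departure.
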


\begin{proof}
The system \eqref{ecubjorling} is elliptic and of \emph{Cauchy-Kovalevskaya} type, and thus it has local existence and uniqueness; see \cite{Pet} for a proof of Cauchy-Kovalevskaya theorem. As the system \eqref{ecubjorling} is elliptic without characteristic points, we have that the existence and uniqueness extends to the whole interval $I$ where $\beta(s)$ and $B(s)$ are defined. Thus, there exist $\delta>0$ and functions $(\psi_1,\psi_2,\psi_3)$ defined in $I\times (-\delta,\delta)\subset\mathbb{C}$ such that $\psi=(\psi_1,\psi_2,\psi_3)$ is a solution of \eqref{pdeinmersion} satisfying $\psi(s,0)=\beta(s),\ \psi_t(s,0)=B(s),\ \forall s\in I$. 

First, observe that equation $\langle\psi_{z\overline{z}},\psi_z\rangle=0$ holds by substituting $\psi_{z\overline{z}}$ for its expression \eqref{pdeinmersion}, and thus the function $\langle\psi_z,\psi_z\rangle$ is holomophic. As $\psi$ satisfies the described initial conditions, the function $\langle\psi_z,\psi_z\rangle$ evaluated at $(s,0)$ is equal to $|\beta'(s)|^2-|B(s)|^2-2i\langle\beta'(s),B(s)\rangle$. This expression vanishes identically, as $\beta'(s)$ and $B(s)$ are orthogonal vector fields with the same length. In this conditions, $\langle\psi_z,\psi_z\rangle$ is a holomorphic function vanishing at the real axis and by the identity principle of holomorphic functions, $\langle\psi_z,\psi_z\rangle$ is identically zero in $I\times(-\delta,\delta)$. We conclude that the map $\psi:I\times(-\delta,\delta)\rightarrow\r3$ is conformal.

By Equation \eqref{ecubjorling}, the regularity of $\beta(s)$, and the orthogonality of $\beta'(s)$ and $B(s)$, it is clear that $\psi$ defines an immersion on an open set $\Omega\subset\mathbb{C}$ containing $I\times\{0\}$. Thus, $\psi$ is a conformal immersion of an $\H$-surface.


This concludes the proof of Theorem \ref{thbjorling}.
\end{proof}

\begin{remark}
As we pointed out, in general system \eqref{ecubjorling} cannot be explicitly integrable, as happens when $\H=0$. However, we can numerically solve it for producing images of $\H$-surfaces in $\R^3$. Indeed, let us denote by $\psi(s,t)=(\psi_1(s,t),\psi_2(s,t),\psi_3(s,t))$. Given $\H\in C^\omega(\s2)$, $\beta(s),B(s):[s_0,s_1]\rightarrow\r3$ a pair of Björling data and $\delta>0$, the solution of the Björling problem can be plotted using standard software with the command
\end{remark}
\vspace{-.3cm}

\texttt{ParametricPlot3D\textcolor{blue}{[}\\
Evaluate\textcolor{orange}{[}\\
First\textcolor{red}{[}$\{\psi[s,t]\}$/.\\
NDSolve\textcolor{green}{[}$\textcolor{purple}{\{}$\\
$D[\psi[s,t],s,s]+D[\psi[s,t],t,t]==2\H[\eta[s,t]]D[\psi[s,t],s]\wedge D[\psi[s,t],t]$,\\
Thread[$\psi[s,0]==\beta[s],D[\psi[s,t],t][s,0]==B[s]$]$\textcolor{purple}{\}}$,\\
$\{\psi_1,\psi_2,\psi_3\},\{s,s_0,s_1\},\{t,-\delta,\delta\}$\textcolor{green}{]}\textcolor{red}{]}\textcolor{orange}{]},$\{s,s_0,s_1\},\{t,-\delta,\delta\}$\textcolor{blue}{]}}.
\vspace{.3cm}

For example, consider the analytic function $\H(x)=\langle x,e_3\rangle,\ \forall x\in\s2$. The $\H$-surfaces arising for this prescribed function are the \emph{self-translating solitons of the mean curvature flow}. The rotational self-translating solitons of the mean curvature flow are classified as follows: an entire, strictly convex vertical graph  that intersects the axis of rotation orthogonally, called the \emph{bowl soliton}; and a one parameter family of properly embedded annuli, with both ends pointing towards the $e_3$ direction, called the \emph{wing-like solitons} or \emph{translating catenoids}. The family of wing-like solitons are parametrized by the \emph{neck size}, i.e. the minimum distance to the axis of rotation, attained at a circumference of radius equal to the \emph{waist}, see \cite{CSS} for details.

Bearing this in mind, we can recover this family by choosing adequate Björling data. Indeed, consider the one parameter family of Björling data $\beta_\tau(s)=\tau(\cos s,\sin s,0)$ and $B_\tau(s)=(0,0,\tau),\ \forall s\in\R,\ \tau>0$. Then, for each fixed $\tau>0$, the translating soliton $\sig_\tau$ generated by this Björling data corresponds to the wing-like soliton with neck size equal to $\tau$. When $\tau\rightarrow 0$, the sequence $\sig_\tau$ converges smoothly to a double cover of the bowl soliton minus the vertex. See Figure \ref{fig:hcatbjorling} for a plot of the wing-like soliton with neck size equal to 1.

\begin{figure}[h]
\centering
\includegraphics[width=.75\textwidth]{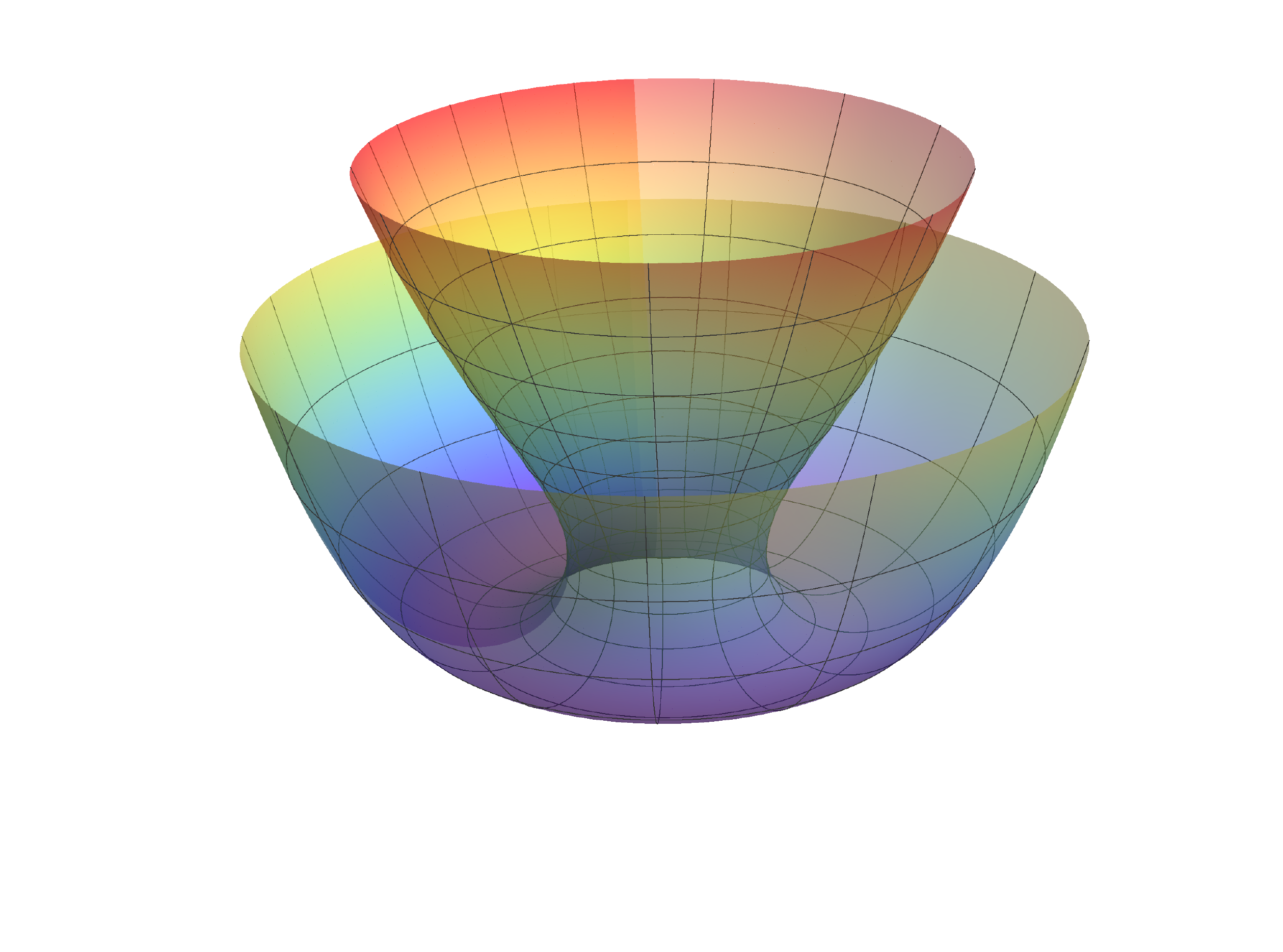}
\vspace{-1.5cm}
\caption{A wing-like soliton for the Björling data $\beta(s)=(\cos s,\sin s,0)$ and $B(s)=(0,0,1),\ s\in\R$.}
\label{fig:hcatbjorling}
\end{figure}

\section{$\H$-Möbius strips in $\r3$}\label{sec:hmobius}

The Björling problem adds a large amount of $\H$-surfaces to the ones studied in \cite{BGM}, for any given $\H\in C^\omega(\s2)$. By choosing adequate Björling data, the examples arising may have some prescribed symmetries, as well as some topological properties. In this context, the Björling problem motivates us for the search of \emph{non-orientable} surfaces. However, we must recall that our $\H$-surfaces are supposed to be oriented, in virtue of Definition \ref{defHsup}, and thus the concept of non-orientable $\H$-surface makes no sense for an arbitrary function $\H$. For instance, if $\H$ is chosen as a positive constant, examples with non-orientable topology do not exist. Thus, some condition on the prescribed function $\H\in C^\omega(\s2)$ must be imposed in order to make Definition \ref{defHsup} independent on the orientation chosen on the $\H$-surface. Bearing this in mind, the mildest hypothesis is that $\H$ has to be \emph{antipodally antisymmetric}, that is $\H(-x)=-\H(x)$ for all $x\in\s2$.

Going back to the propose of studying non-orientable $\H$-surfaces, the most recurrent examples are the surfaces with the topology of a \emph{Möbius strip}; we advise \cite{Mee,Mir} as two relevant works regarding minimal Möbius strips in $\rn$ given as the solution of the Björling problem. The following result is inspired in the ideas developed in \cite{Mir}.

\begin{proposition}\label{hcintas}
Let $\H\in C^\omega(\s2)$ such that $\H(-x)=-\H(x),\ \forall x\in\s2$, and let $\beta(s),B(s)$ be Björling data such that $\beta(s)$ is $T$-periodic and $B(s)$ is $T$-antiperiodic, i.e. $B(s+T)=-B(s)$, for some $T>0$. 

Then, the $\H$-surface generated by Theorem \ref{thbjorling} for this Björling data has the topology of a Möbius strip, with fundamental group generated by $\beta(s)$.

Conversely, every $\H$-\emph{Möbius strip} is generated in this way.
\end{proposition}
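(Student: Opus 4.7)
The plan is to reduce both directions to the uniqueness part of Theorem \ref{thbjorling}. Let $\psi:\Omega\to\r3$ denote the solution of the Björling problem for the given data $(\beta,B)$; by the periodicity of $\beta$ I would take $\Omega=\R\times(-\delta,\delta)$ as a strip. Define the companion map $\tilde\psi(s,t):=\psi(s+T,-t)$, which is well-defined on the same $\Omega$. Using $\beta(s+T)=\beta(s)$ and $B(s+T)=-B(s)$, a direct computation gives $\tilde\psi(s,0)=\beta(s)$ and $\tilde\psi_t(s,0)=-\psi_t(s+T,0)=-B(s+T)=B(s)$, so $\tilde\psi$ satisfies the same Björling initial conditions as $\psi$.

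The heart of the argument is to verify that $\tilde\psi$ also solves \eqref{ecubjorling}. The involution $(s,t)\mapsto(s+T,-t)$ is orientation-reversing, so it swaps the Wirtinger operators at corresponding points: $\tilde\psi_z(s,t)=\psi_{\bar z}(s+T,-t)$ and $\tilde\psi_{\bar z}(s,t)=\psi_z(s+T,-t)$. Hence the conformal factor is preserved while $\tilde\psi_z\wedge\tilde\psi_{\bar z}=-\psi_z\wedge\psi_{\bar z}$ at $(s+T,-t)$, and formula \eqref{normalconforme} yields $\tilde\eta(s,t)=-\eta(s+T,-t)$. At this point the antipodal antisymmetry $\H(-x)=-\H(x)$ enters decisively: substituting $\H(\tilde\eta)=-\H(\eta)$ in the PDE produces two sign flips that cancel, and the equation \eqref{ecubjorling} for $\psi$ at $(s+T,-t)$ transforms into the same equation for $\tilde\psi$ at $(s,t)$. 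By uniqueness in Theorem \ref{thbjorling}, $\tilde\psi\equiv\psi$, i.e.\ $\psi(s+T,-t)=\psi(s,t)$, so $\psi$ factors through the quotient of $\R\times(-\delta,\delta)$ by the relation $(s,t)\sim(s+T,-t)$, which is topologically a Möbius strip whose fundamental group is generated by the class of $\beta$.

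For the converse, given an $\H$-Möbius strip $\sig$ I would take a parametrization $\beta(s)$ of its central circle (the generator of $\pi_1(\sig)$), periodic of some period $T>0$, and let $B(s)$ be the unit analytic vector field in $T_{\beta(s)}\sig$ orthogonal to $\beta'(s)$ determined by a local orientation on a tubular neighborhood of $\beta$. Since $\beta$ represents the orientation-reversing loop of $\sig$, one automatically obtains $B(s+T)=-B(s)$. Applying the direct part to $(\beta,B)$ yields an $\H$-Möbius strip which, by local uniqueness of the Björling problem together with analyticity, must coincide with $\sig$. I expect the main obstacle to be the sign bookkeeping in the PDE verification of the second paragraph: one must carefully track how the wedge product, the Gauss map, and the Wirtinger derivatives simultaneously transform under the involution, and confirm that the hypothesis $\H(-x)=-\H(x)$ is precisely what balances all of these sign changes.
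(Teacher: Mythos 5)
Your proposal is correct and follows essentially the same route as the paper: both directions rest on the uniqueness part of Theorem \ref{thbjorling} applied to the antiholomorphic involution $z\mapsto\overline{z}+T$ (your $(s,t)\mapsto(s+T,-t)$), with the antisymmetry $\H(-x)=-\H(x)$ absorbing the simultaneous sign flips of the normal and the wedge product, and with the orientation double cover handling the converse. Your sign bookkeeping in the second paragraph is in fact more explicit than the paper's own sketch.
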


\begin{proof}
We will give a sketch of the proof, since it is an adaptation of the one used to prove Lemma 3 in \cite{Mir}.

First, let $\psi:\M\rightarrow\r3$ be an immersion of an $\H$-Möbius strip, and let $\Gamma(s)$ be a regular, analytic, closed curve in $\M$ that generates its fundamental group. As $\Gamma(s)$ is closed, it is $T$-periodic for some $T>0$. Denote by $\widetilde{\M}$ to its \emph{two sheeted cover}, where we have defined an antiholomorphic involution $I$ without fixed points, and let $\pi:\widetilde{\M}\rightarrow\R^3$ be the canonical projection. In this setting, there exists a regular, analytic, closed curve $\widetilde{\Gamma}(s)$ that generates the fundamental group of $\widetilde{\M}$, defined by $\Gamma(s)=\pi(\widetilde{\Gamma}(s))$, and in particular it is $2T$-periodic.

If we consider $\widetilde{\psi}:\widetilde{\M}\rightarrow\R^3$ given by $\widetilde{\psi}=\psi\circ\pi$, then $\beta(s)=\widetilde{\psi}(\widetilde{\Gamma}(s))$ is a regular, analytic $T$-periodic curve in $\R^3$. Denoting by $\Pi(s)$ the oriented tangent plane distribution of $\widetilde{\psi}$ along $\beta(s)$ we have that $\Pi(s+T)$ and $\Pi(s)$ agree with opposite orientation. Therefore, if $B(s):=J\beta'(s)$ is the $\pi/2$ rotation of $\beta'(s)$ in $\Pi(s)$, it happens that $B(s+T)=-B(s)$. Thus, we have proved that every $\H$-Möbius strip defines a pair of Björling data with the periodicity properties stated.

%

\begin{figure}[h]
\centering
\includegraphics[width=.8\textwidth]{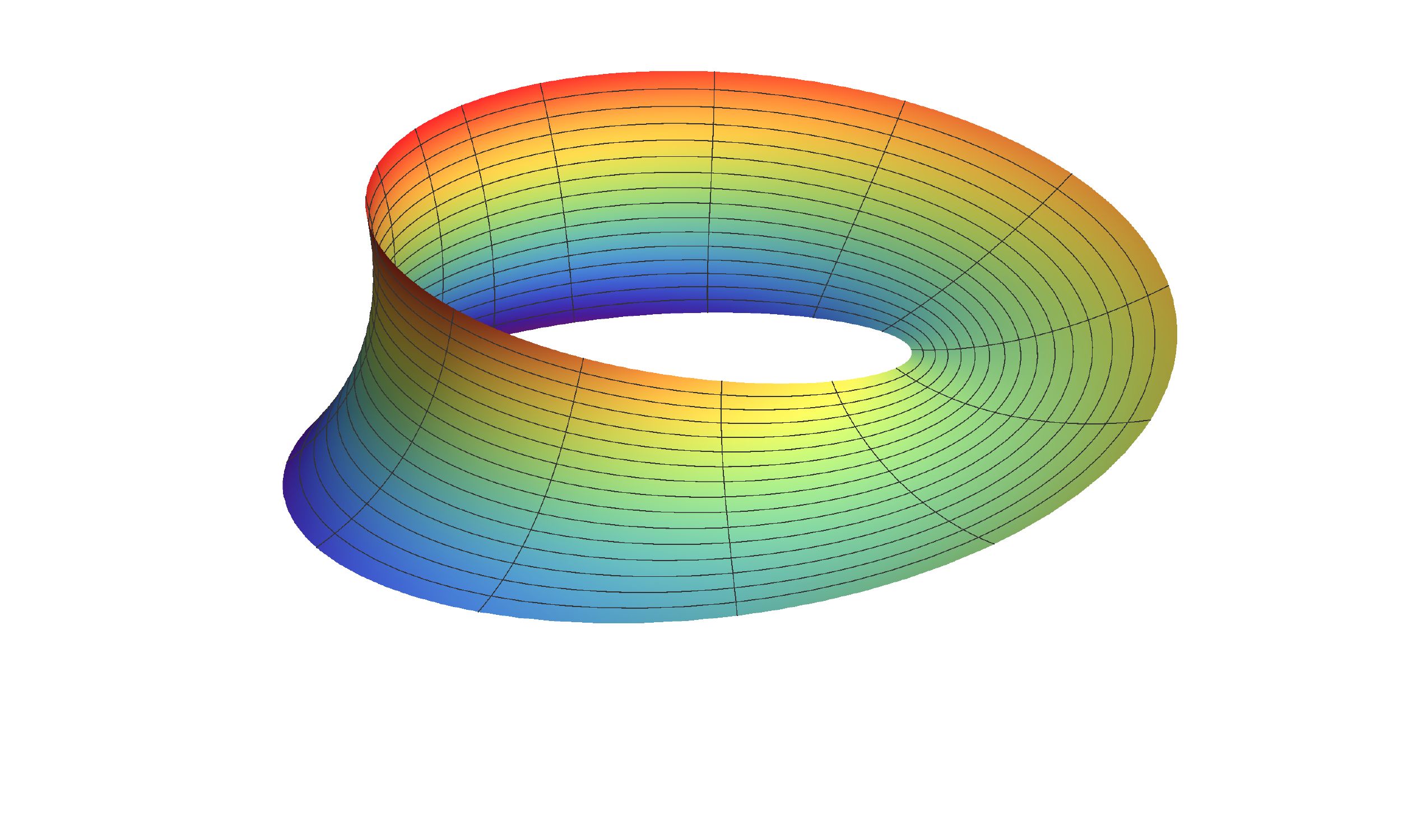}
\vspace{-1cm}
\caption{A self-translating soliton of the mean curvature flow with the topology of a Möbius stirp, generated by the Björling data $\beta(s)=1/2(\cos 2s,\sin 2s,0),\ B(s)=\cos s(\cos s,\sin s,0)+\sin s(0,0,1),\ \forall s\in\R$.}
\label{fig:hmobius}
\end{figure}

Conversely, let $\beta(s)$ and $B(s)$ be a pair of Björling data such that $\beta(s)$ is $T$-periodic and $B(s)$ is $T$-antiperiodic. In particular, they are defined on the entire real line. Let $\psi:\Omega\rightarrow\R$ be the solution of this Björling problem given by Theorem \ref{thbjorling}. The $2T$-periodicity of $\beta(s)$ and $B(s)$ along with the uniqueness of the Björling problem, ensures us that $\psi$ is well defined on the quotient $\Omega/(2T\mathbb{Z})$, which is a topological cylinder.

We can suppose that $\Omega$ is symmetric with respect to the conjugation, i.e. $\overline{z}\in\Omega,\ \forall z\in\Omega$, and thus we can define on $\Omega/(2T\mathbb{Z})$ the map
\begin{equation}
\begin{array}{cccc}
I&:\Omega/(2T\mathbb{Z})&\longrightarrow&\Omega/(2T\mathbb{Z})\\
& z&\longmapsto& \overline{z}+T.
\end{array}
\end{equation}

It is also clear that $I$ is an antiholomorphic involution without fixed points, and thus it reverses the orientation of $\Omega/(2T\mathbb{Z})$. Moreover, $I$ defines the following equivalence relation on $\Omega/(2T\mathbb{Z})$: two points $z,w\in\Omega/(2T\mathbb{Z})$ are related if and only if $I(z)=w$. In this situation, the cylinder $\Omega/(2T\mathbb{Z})$ is the orientable two sheeted cover of the space $(\Omega/(2T\mathbb{Z}),I)$, with the canonical projection $\mathfrak{p}:\Omega/(2T\mathbb{Z})\rightarrow(\Omega/(2T\mathbb{Z}),I)$.

Because $\psi$ is an immersion, the unitary vector field $\eta:\Omega\rightarrow\r3$ as defined in \eqref{normalconforme} is a well defined, unitary, normal vector field for $\psi$. Given $z,w\in\Omega/(2T\Z)$ such that $w=I(z)$, the uniqueness of the Björling problem implies that the unit normals $\eta(z)$ and $\eta(w)$ are opposite. Bearing this in mind, we have:
\begin{equation}\label{pegan}
H_{\psi}(\psi(w))=\H(\eta(w))=\H(\eta(I(z)))=\H(-\eta(z))=-\H(\eta(z))=-H_{\psi}(\psi(z)),
\end{equation}
and thus the mean curvature at the point $\psi(z)$ has opposite sign to the mean curvature at the point $\psi(I(z))$, for all $z\in\Omega$.

Again, the uniqueness of Theorem \ref{thbjorling} allows us to conclude that the quotient map $\widetilde{\psi}(z)=(\psi\circ\mathfrak{p})(z)$ for all $z\in\Omega/(2T\Z)$ is a well defined, conformal immersion of an $\H$-surface in $\r3$ having the topology of a Möbius strip, and in particular is non-orientable. This concludes the proof of Proposition \ref{hcintas}.
\end{proof}

Note that the function $\H(x)=\langle x,e_3\rangle,\ \forall x\in\s2$ lies in the hypothesis of Proposition \ref{hcintas}. Thus, we ensure the existence of self-translating solitons of the mean curvature flow with the topology of a Möbius strip, which we will refer to as \emph{translating Möbius strips}, see Figure \ref{fig:hmobius}. After a detailed search in the literature, we can assert that this construction gives the first example of a self-translating soliton of the mean curvature flow with non-orientable topology.

In Figure \ref{fig:noorientable} we show the construction of a non-orientable translating soliton constructed by half-rotating the vector field $B(s)$ 7 times along the curve $\beta(s)$ before it closes. This surface is homeomorphic to the Möbius strip showed in Figure \ref{fig:hmobius}. When the mean curvature vanishes, the minimal surfaces as the one appearing in Figure \ref{fig:noorientable} were firstly constructed in \cite{Mir}, see also the independent work of \cite{MeWe}. These surfaces are commonly known as \emph{bended helicoids}.

\begin{figure}[H]
\centering
\includegraphics[width=.65\textwidth]{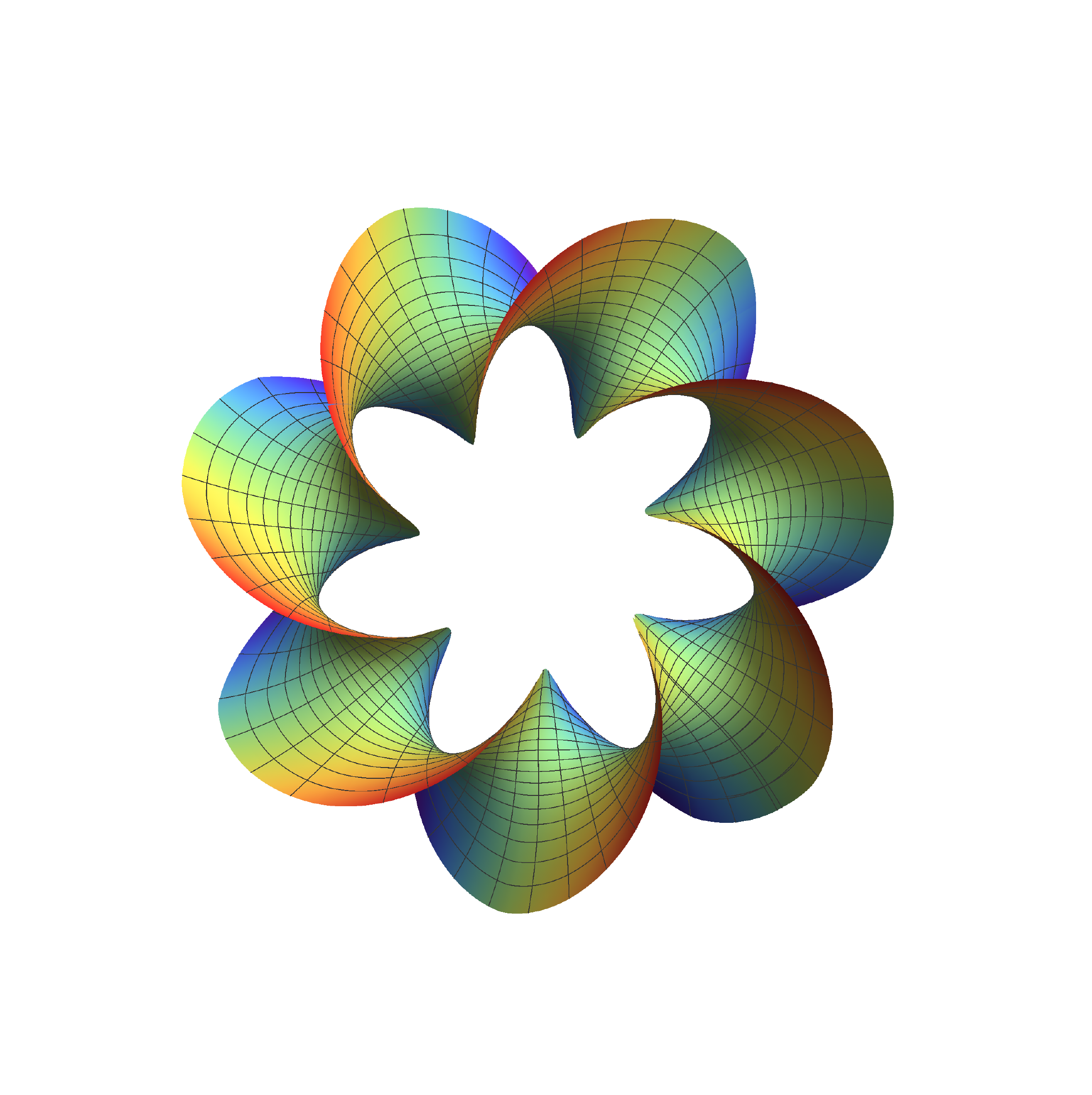}
\vspace{-1cm}
\caption{An $\H$-bended helicoid with the topology of a Möbius strip for the analytic function $\H(x)=\langle x,e_3\rangle,\ x\in\s2$.}
\label{fig:noorientable}
\end{figure}

\section{Further examples of $\H$-surfaces via the Björling problem}\label{sec:masejemplos}

In this last Section we show the existence of some examples of $\H$-surfaces immersed in $\r3$, motivated by the analogous examples defined in the minimal surface theory. The main difference with the minimal case $\H=0$ in $\r3$ is that we fail to have a Weierstrass representation, and thus explicit parametrizations of these surfaces are not expected. Even so, we can prove existence and uniqueness by means of Theorem \ref{thbjorling} for adequate Björling data with some prescribed symmetries, and obtain $\H$-surfaces that are the analogous to the famous examples in the minimal surface theory.
\vspace{.5cm}

\textbf{\underline{$\H$-Helicoids}}

We choose as Björling data the vertical curve $\beta(s)=(0,0,s)$ and a $T$-periodic, analytic, unitary vector field $B(s)$ along $\beta(s)$, for some $T>0$, and let $\sig$ be $\H$-surface given as the solution of the Björling problem for this Björling data.

The unit normal vector field at the $e_3$-axis, namely $\eta(s):=\beta'(s)\wedge B(s)$, is a horizontal vector field satisfying $\eta(s)=\eta(s+T),\ \forall s\in\R$, and the Björling data $\beta(s+T),B(s+T)$ agree with the Björling data $\beta(s)+T e_3,B(s)$. Moreover, as the condition $\eta(s)=\eta(s+T),\ \forall s\in\R$ holds, the points $\beta(s)$ and $\beta(s+T)$ have the same mean curvature.

Bearing this in mind, if we denote by $\Theta(p)=p+T e_3,\ \forall p\in\r3$ , the uniqueness of the Björling problem ensures us that $\sig$ is invariant by the discrete group of translations $T\mathbb{Z}\Theta$ in the $e_3$-direction. Moreover, starting at some $s_0\in\R$, $\sig$ \emph{twists} jointly with $\eta(s)$ around the $e_3$-axis until reaching the instant $\eta(s_0+T)=\eta(s_0)$, generating a simply connected \emph{fundamental part} of $\sig$. Repeating this process, which is just translating this fundamental part of $\sig$ under the action of $\Theta$, we get the whole $\H$-surface $\sig$.

We will refer to these $\H$-surfaces as $\H$-helicoids, since they generalize the usual minimal helicoids in $\r3$. See Figure \ref{fig:hhelicoide} for a plot of an $\H$-helicoid for the particular function $\H(x)=\langle x,e_3\rangle$, and the Björling data $\beta(s)=(0,0,s),\ B(s)=(\cos s,\sin s,0)$.

\begin{figure}[H]
\centering
\includegraphics[width=.25\textwidth]{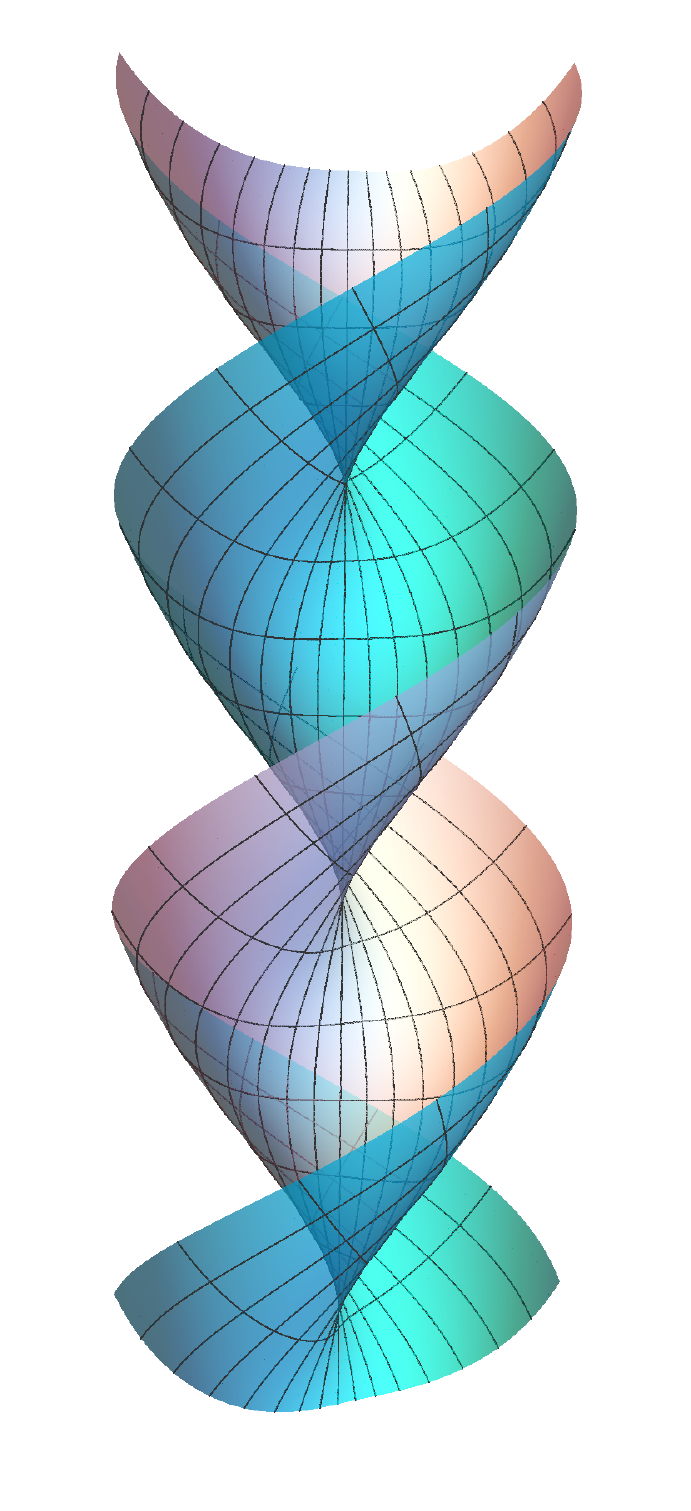}
\caption{An $\H$-helicoid for the analytic function $\H(x)=\langle x,e_3\rangle,\ x\in\s2$.}
\label{fig:hhelicoide}
\end{figure}
\vspace{.5cm}

\textbf{\underline{Enneper-type $\H$-surfaces}}

Finally, we construct $\H$-surfaces based on some curves contained in the well-known Enneper's surface, one of the most famous examples in the minimal surface theory.

First, consider the curve
$$
\beta(s)=(0,-s(1-s^2/3)/3,-s^2/3)
$$
and the vector field
$$
B(s)=1/3(1+s^2)(1,0,0).
$$
Then, both $\beta(s)$ and $B(s)$ are analytic and satisfy $|\beta'(s)|-|B(s)|=\langle\beta'(s),B(s)\rangle=0,\ \forall s\in\R$, i.e. they can be chosen to be Björling data. If $\H=0$, then the surface given by Theorem \ref{thbjorling} is Enneper's minimal surface. In particular, the curve $\beta(s)$ is obtained as intersecting Enneper's minimal surface with the plane $\{x=0\}$, which is a plane of reflection symmetry of Enneper's minimal surface. For the function $\H(x)=\langle x,e_3\rangle,\ \forall x\in\s2$, the $\H$-surface arising is a translating soliton of the mean curvature flow that resembles indeed to Enneper's minimal surface, see Figure \ref{fig:henneper}.

\begin{figure}[H]
\centering
\includegraphics[width=.6\textwidth]{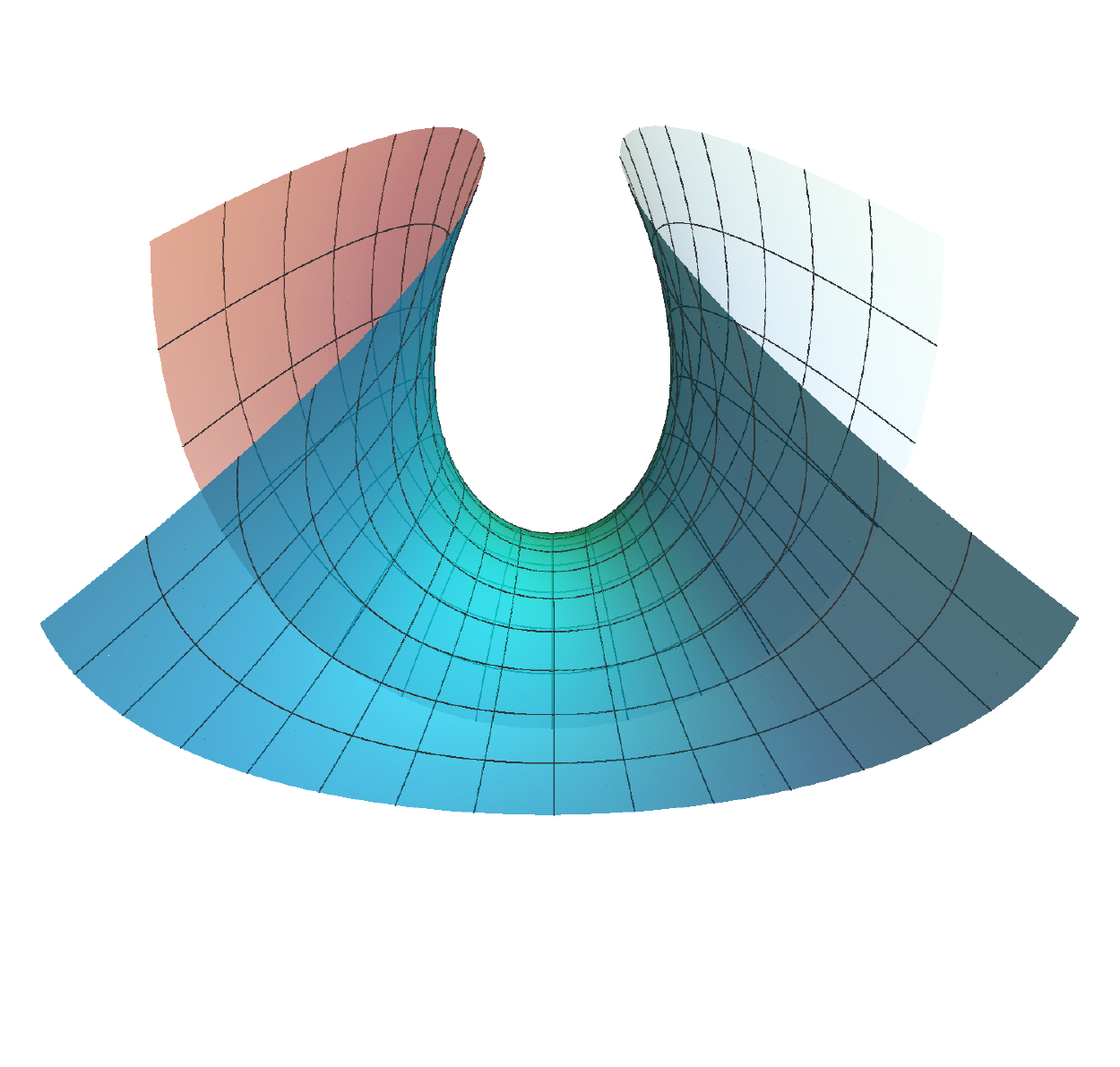}
\vspace{-1cm}
\caption{An $\H$-surface constructed over the curve in Enneper's minimal surface invariant under the reflection with respect to the plane $\{x=0\}$; here $\H(x)=\langle x,e_3\rangle,\ \forall x\in\s2$.}
\label{fig:henneper}
\end{figure}

We can also choose as Björling data the following
$$
\begin{array}{c}
\beta(s)=(-\cos s-1/3\cos(3s),\sin s-1/3\sin(3s),\sin(2s)),\\
B(s)=(\cos s+\cos(3s),2\cos(2s)\sin s,-2\sin(2s)).
\end{array}
$$
Again, for $\H=0$ the surface given by Theorem \ref{thbjorling} is Enneper's minimal surface, and we will call $\beta(s)$ \emph{Enneper's core curve}, see \cite{LoWe}. For the function $\H(x)=\langle x,e_3\rangle,\ \forall x\in\S^2$, the translating soliton arising also resembles to Enneper's minimal surface, see Figure \ref{fig:ennepers}, left. This time we cannot guarantee that the \emph{hole} in the middle will eventually close, as we fail to have an explicit parametrization. If we make the vector $B(s)$ twist along the curve $\beta(s)$ and odd number of times, we get another translating soliton with the topology of a Möbius strip; see Figure \ref{fig:ennepers}, right.

\begin{figure}[H]
\vspace{-2cm}
\hspace{-2cm}
\includegraphics[width=.7\textwidth]{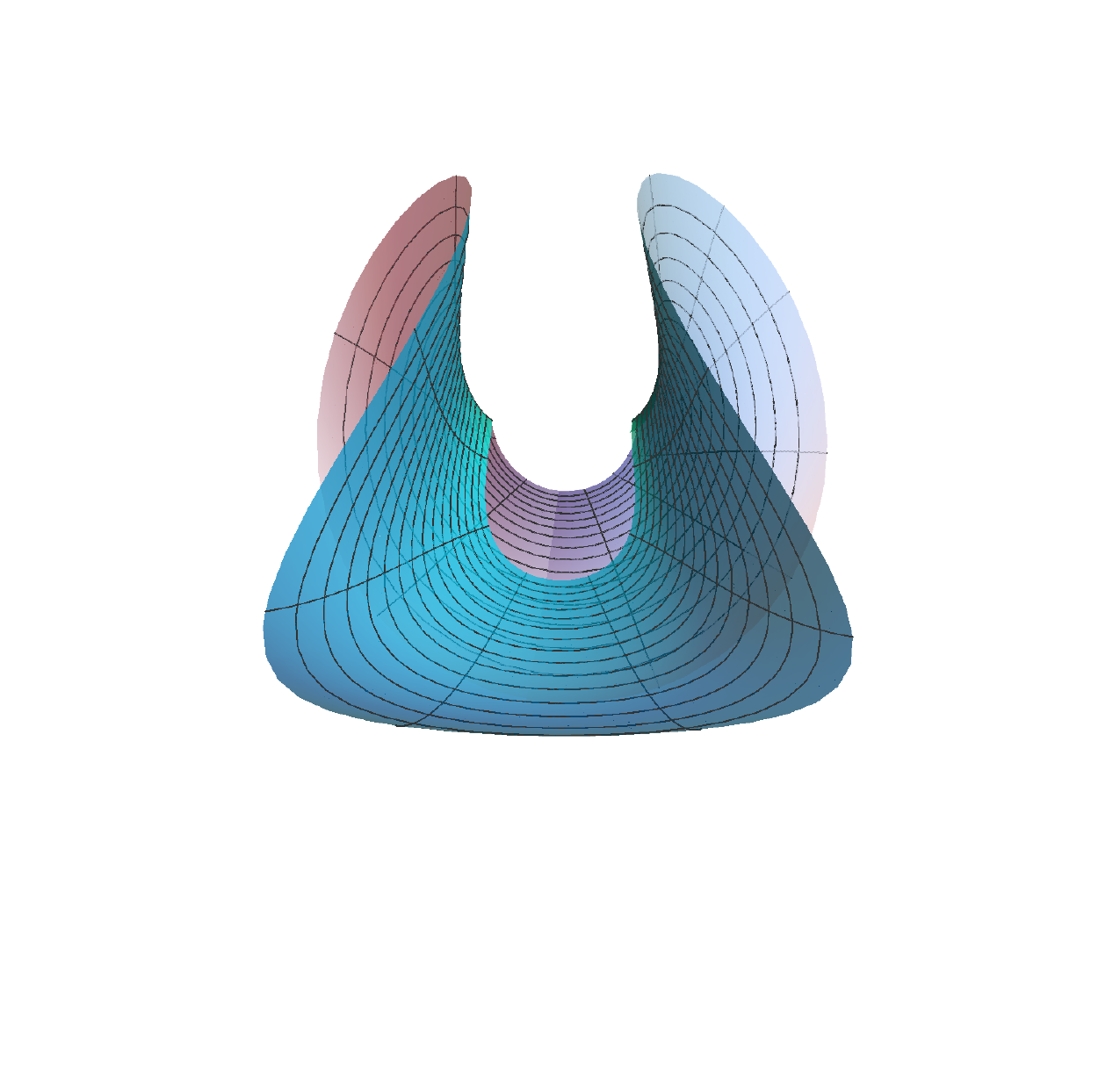}
\hspace{-2cm}
\includegraphics[width=.7\textwidth]{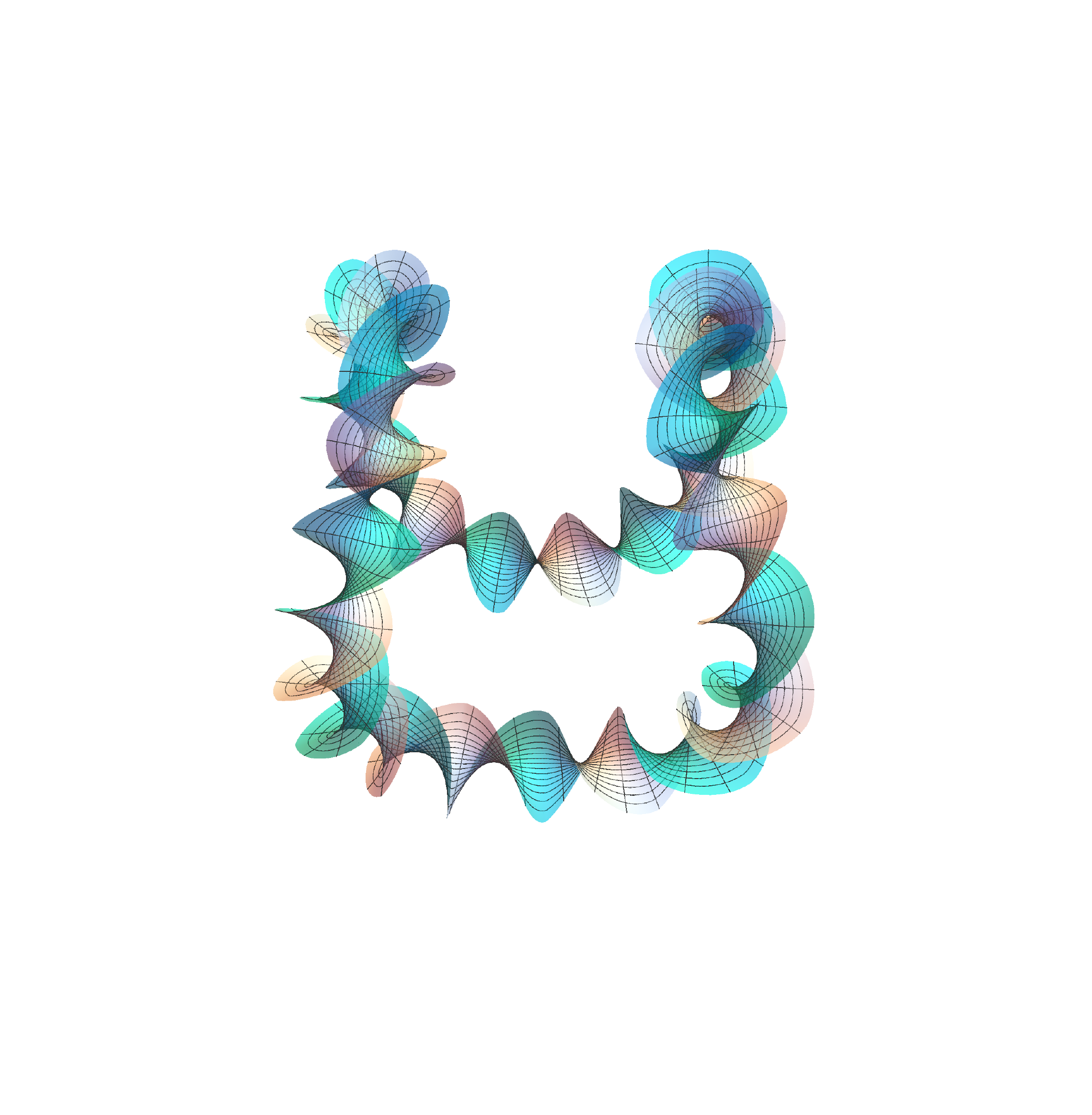}
\vspace{-3cm}
\caption{Two self-translating solitons of the mean curvature flow based on Enneper's core curve.}
\label{fig:ennepers}
\end{figure}

\noindent The author was partially supported by MICINN-FEDER Grant No. MTM2016-80313-P, Junta de Andalucía Grant No. FQM325 and FPI-MINECO Grant No. BES-2014-067663.
 \end{document}